\newcommand{\sysn}{\left\{\begin{array}{rcl}}
\newcommand{\sysk}{\end{array}\right.}
\newtheorem{theorem}{Theorem}[section]
\newtheorem{lemma}[theorem]{Lemma}
\theoremstyle{example}
\newtheorem{proposition}[theorem]{Proposition}
\theoremstyle{definition}
\newtheorem{definition}[theorem]{Definition}
\newtheorem{corollary}[theorem]{Corollary}
\journal{...}
\begin{document}

\title{On the product of almost discrete Grothendieck spaces}


\author{Alexander V. Osipov}


\ead{OAB@list.ru}

\address{Krasovskii Institute of Mathematics and Mechanics, Yekaterinburg, Russia}


\address{Ural Federal University, Yekaterinburg, Russia}

\begin{abstract} A topological space $X$ is called almost discrete, if it has precisely one nonisolated point. In this paper, we get that for a countable product $X=\prod X_i$ of almost discrete spaces $X_i$  the space $C_p(X)$ of continuous real-valued functions with the topology of pointwise convergence is a $\mu$-space if, and only if, $X$ is a weak $q$-space if, and only if, $t(X)=\omega$ if, and only if, $X$ is functionally generated by the family of all its countable
subspaces.

This result makes it possible to solve Archangel'skii's problem on the product of Grothendieck spaces. It is proved that in the model of $ZFC$, obtained by adding one
Cohen real, there are Grothendieck spaces $X$ and $Y$ such that $X\times Y$ is not weakly Grothendieck space. In $(PFA)$: the product of any countable family almost discrete Grothendieck spaces is a Grothendieck space.

\end{abstract}


\begin{keyword}
function space \sep Grothendieck space \sep weakly Grothendieck space   \sep weak $q$-space  \sep $C_p$-theory \sep $\mu$-space \sep tightness \sep Grothendieck's theorem \sep realcomplete

\MSC[2010] 54C35 \sep 54B10 \sep 54A25 \sep 54F65 \sep 54D60 \sep 54D20

\end{keyword}

\maketitle 


\section{Introduction}

In 1952 Grothendieck \cite{Grot} proved the following result.

\begin{theorem}(Grothendieck) Let $X$ be a compact space and let $Y$ be a metrizable
space. Then each relatively countably compact subspace
 of $C_p(X,Y)$ is relatively compact.
\end{theorem}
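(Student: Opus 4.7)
The plan is to prove compactness of the closure $\overline{A}$ of $A$ in $C_p(X,Y)$ by working inside the Tychonoff product $Y^X$, of which $C_p(X,Y)$ is a subspace. The argument splits in two: first I show that the pointwise closure $\overline{A}^{Y^X}$ is compact in $Y^X$, then I show this closure is already contained in $C(X,Y)$, so that the two closures coincide and the $C_p$-closure inherits compactness.

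For the first part I use that, for each $x \in X$, the evaluation $e_x \colon C_p(X,Y) \to Y$, $f \mapsto f(x)$, is continuous, so $e_x(A)$ is relatively countably compact in $Y$. Since $Y$ is metrizable, relative countable compactness coincides with relative compactness, and $\overline{e_x(A)}$ is compact in $Y$. Tychonoff's theorem gives compactness of $K := \prod_{x \in X} \overline{e_x(A)}$, and $\overline{A}^{Y^X}$ is a closed subspace of $K$.

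The substantive step is verifying that every $f \in \overline{A}^{Y^X}$ is continuous. Suppose toward a contradiction that such an $f$ is discontinuous at some $x_0 \in X$, so that there is $\varepsilon > 0$ such that every neighborhood of $x_0$ contains a point $x$ with $d(f(x), f(x_0)) \geq \varepsilon$. I will perform a simultaneous induction producing sequences $(x_n) \subset X$ and $(f_n) \subset A$ such that each $f_n$ approximates $f$ to within $1/n$ on the finite set $\{x_0, x_1, \dots, x_{n-1}\}$ (possible since $f$ lies in the pointwise closure of $A$), while each $x_n$ is chosen to witness the discontinuity of $f$ at $x_0$ relative to the already-chosen functions $f_1, \dots, f_n$. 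Compactness of $X$ then supplies a cluster point $x^* \in X$ of $(x_n)$, and relative countable compactness of $A$ in $C_p(X,Y)$ supplies a continuous cluster point $g \in C_p(X,Y)$ of $(f_n)$.

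The main obstacle, and the core of Grothendieck's argument, is the double-limit interchange that produces the contradiction. Along appropriate subsequences, one examines the two iterated limits $\lim_n \lim_m f_m(x_n)$ and $\lim_m \lim_n f_m(x_n)$: by the approximation property the first equals $\lim_n f(x_n)$, which stays at distance $\geq \varepsilon$ from $f(x_0)$, while the second equals $\lim_m g(x_n)$, forced by continuity of $g$ at $x^*$ (and the approximation property relating $g(x^*)$ back to $f(x_0)$) to lie close to $f(x_0)$. Arranging, by a diagonal passage, that both iterated limits exist along compatible indices and yield contradictory values is the delicate bookkeeping step; once it is set up, the contradiction is immediate and the theorem follows.
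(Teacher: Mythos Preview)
The paper does not contain a proof of this theorem: it is quoted in the introduction as Grothendieck's 1952 result, with a citation to \cite{Grot}, and serves only as historical motivation for the notion of a Grothendieck space. There is therefore nothing in the paper to compare your argument against.

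On its own merits, your outline is the classical Grothendieck--Pt\'ak double-limit argument and is sound. One point deserves to be made explicit: when you say $x_n$ is chosen ``relative to the already-chosen functions $f_1,\dots,f_n$'', you should record concretely that $x_n$ lies in a neighborhood of $x_0$ small enough that $d(f_i(x_n),f_i(x_0))<1/n$ for all $i\le n$ (possible by continuity of each $f_i$). This is what guarantees $f_i(x^*)=f_i(x_0)$ for every $i$ at the cluster point $x^*$, and hence $g(x^*)=g(x_0)$; combined with $g(x_n)=f(x_n)$ and $d(f(x_n),f(x_0))\ge\varepsilon$, this produces the contradiction with continuity of $g$. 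With that detail supplied, the argument is complete.
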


This theorem has played an important role in topology, mathematical and
functional analysis. Grothendieck's theorem has been generalized many times \cite{Os,Arch1,Chr,PS,Pt}.

\medskip
A topological space $X$ is called {\it almost discrete}, if it has precisely one nonisolated point.

\medskip
A space $X$ is called {\it $g$-space}, if for every subset $A$ of $X$ such that $A$ is countably compact in $X$, the closure of $A$ in $X$ is compact. A space $X$ is called {\it a Grothendieck space (a weakly Grothendieck space)}, if $C_p(X)$ is a hereditary $g$-space (a $g$-space).

The product of two weakly Grothendieck spaces need not be a weakly Grothendieck space (see Proposition \ref{pr11}).

\medskip

In 1998, A.V. Archangel'skii posed the following question (Question 5.13 in \cite{Arh98}, see also Problem 4.8.3 in \cite{Tk}):

{\it Is the product of two Grothendieck spaces a weakly Grothendieck? A Grothendieck space?}

In this paper we get the following results:

\medskip

$\bullet$ Countable product $X$ of almost discrete spaces is weakly Grothendieck if, and only if, the tightness of $X$ is countable (Th. \ref{th1}).

\medskip

$\bullet$ Countable product $X$ of almost discrete spaces is Grothendieck if, and only if,  the tightness of $X$ is countable and $X$ is Lindel\"{o}f (Th. \ref{th61}).

\medskip

$\bullet$ In the model of $ZFC$, obtained by adding one Cohen real, there are almost
discrete Grothendieck spaces $Y_0$ and $Y_1$ such that $Y_0\times Y_1$ is not weakly Grothendieck (Th. \ref{th65}).

\medskip

$\bullet$ $(PFA)$ Countable product of almost discrete Grothendieck spaces is a Grothendieck space. (Th. \ref{th68}).

\section{Notation and terminology}
 The set of positive integers is denoted by $\mathbb{N}$ and
$\omega=\mathbb{N}\cup \{0\}$. Let $\mathbb{R}$ be the real line. We denote by $\overline{A}$ the closure of $A$ (in $X$).

Recall that a subset $A$ of a topological space $X$ is called

$\bullet$ {\it relatively compact} if $A$ has a compact closure in $X$;

$\bullet$ {\it bounded} if every continuous function on $X$ is bounded on $A$;

$\bullet$  {\it countably compact} if each sequence of $A$ has a
limit point in $A$;

$\bullet$  {\it relatively countably compact} if each sequence of $A$ has a
limit point in $X$;

$\bullet$  {\it pseudocompact} if any continuous real-valued function on $A$ is bounded;

$\bullet$  {\it countably pracompact} subspace of $X$, if there is a subspace $Y\subseteq A$ which is dense in $A$ and countably compact in $A$ in the following sense: every infinite set $B\subseteq Y$ has a limit point in $A$.
\medskip

A topological space $X$ is called {\it $\mu$-space}, if each bounded subset of $X$ is relatively compact.

\medskip
Let $X$ be a Tychonoff topological space, $C(X,\mathbb{R})$ be the
space of all  continuous functions on $X$ with values in
$\mathbb{R}$ and $\tau_p$ be the pointwise convergence topology.
Denote by $C_p(X)$ the topological space
$(C(X,\mathbb{R}), \tau_p)$.

\medskip

It will be convenient for us to use (for brevity) the notation proposed by E. Reznichenko for the following general property.

\medskip

Let $\mathcal{P}$ be a family subsets of $C_p(X)$. If $A$ is relatively compact in $C_p(X)$ for any $A\in \mathcal{P}$, then we will write that $X$ is a {\it  $\mu^{\sharp}_{\mathcal{P}}$-space}.

\medskip

In this paper we consider the following families $\mathcal{P}$ of subsets of $C_p(X)$:

 ${\bf rc}$ --- family of all relatively compact subsets;

 ${\bf b}$ --- family of all bounded subsets;

  ${\bf cc}$ --- family of all countably compact subsets;

 ${\bf rcc}$ --- family of all relatively countably compact subsets;

  ${\bf pcc}$ --- family of all pseudocompact subsets;

 ${\bf cp}$ --- family of all countably pracompact subsets.

\medskip

Note that $C_p(X)$ is a $\mu$-space equivalent to $X$ is a $\mu^{\sharp}_{b}$-space.

Also note that weakly Grothendieck spaces in this terminology are $\mu^{\sharp}_{cc}$-spaces.

\medskip

Let $\mathcal{M}$ be a family of subspaces of a space $X$. We say that
$X$ is {\it functionally generated by $\mathcal{M}$} if for any discontinuous function $f: X \rightarrow \mathbb{R}$
there is an $A\in\mathcal{M}$ such that $f|A$ cannot be extended to a continuous real-valued function on the whole of $X$ (Definition 2.11 in \cite{Arh33}).

For example, $X$ is functionally generated by the family of all its countable
subspaces if and only if its weak functional tightness $t_{\bf R}$ is countable: $t_{\bf R}(X)=\omega$; this
follows directly from the definition of  weak functional tightness $t_{\bf R}$  in \cite{arch}.

The {\it tightness} $t(X)$ of $X$ is the smallest infinite cardinal $\tau$ such that for any set $A\subset X$ and any point $x\in X$ there is a set $B\subset X$ for which $|B|\leq \tau$ and $x\in \overline{B}$.

The notation of a weak $q$-space defined in \cite{AsVel}. Recall that a space is called {\it realcomplete} if it is homeomorphic to a closed subspace of the space $\mathbb{R}^{\tau}$ for a certain $\tau$.

For other notation and terminology almost without exceptions we follow the Engelking's book \cite{Eng}.

\section{Countable product of almost discrete spaces}

\begin{lemma} (Corollary II.4.17 in \cite{arch})\label{lem0} If $t(X)=\omega$, then $C_p(X)$ is realcomplete and, hence, $C_p(X)$ is a $\mu$-space.

\end{lemma}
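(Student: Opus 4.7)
The plan is to split the lemma into two independent parts: first, that $t(X)=\omega$ forces $C_p(X)$ to be realcomplete; second, that any realcomplete space is a $\mu$-space. The second half is essentially mechanical once the first is established, so my main effort will go into the first.

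For the realcompleteness part, I would begin by passing from the ordinary tightness to the weak functional tightness: since $t_{\mathbf{R}}(X)\le t(X)$, the assumption $t(X)=\omega$ gives $t_{\mathbf{R}}(X)=\omega$, so by the characterization recorded just before the lemma, $X$ is functionally generated by the family $\mathcal{C}$ of all its countable subspaces. Next I would consider the natural restriction map
\[
\Phi\colon C_p(X)\longrightarrow \prod_{A\in\mathcal{C}} C_p(A),\qquad \Phi(f)=(f|_A)_{A\in\mathcal{C}}.
\]
Since singletons belong to $\mathcal{C}$, the map is injective and continuous (pointwise convergence passes to restrictions). The key step is to show the image is closed: a point $(\varphi_A)$ in the closure satisfies the compatibility $\varphi_A(x)=\varphi_{A'}(x)$ for $x\in A\cap A'$, so it defines a well-behaved $f\colon X\to\mathbb{R}$ with $f|_A=\varphi_A\in C_p(A)$ for every $A\in\mathcal{C}$; if $f$ were discontinuous, functional generation of $X$ by $\mathcal{C}$ would produce an $A\in\mathcal{C}$ with $f|_A$ having no continuous extension to $X$, contradicting the fact that $\varphi_A$ is a pointwise limit of continuous restrictions of genuine elements of $C_p(X)$ (this last implication is precisely the content I expect to be delicate, and it is what Arhangel'skii's Corollary~II.4.17 packages; I would cite it rather than reprove it). Once $\Phi$ is a closed embedding, each factor $C_p(A)$ is a subspace of $\mathbb{R}^A$ with $A$ countable, hence separable metrizable and thus realcomplete; products and closed subspaces of realcomplete spaces are realcomplete, giving the conclusion for $C_p(X)$.

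For the second half, I would argue directly: if $C_p(X)$ sits as a closed subspace of some $\mathbb{R}^{\tau}$ and $A\subseteq C_p(X)$ is bounded, then each coordinate projection $\pi_\alpha\colon \mathbb{R}^{\tau}\to\mathbb{R}$ restricts to a continuous real-valued function on $C_p(X)$, so $\pi_\alpha(A)$ is a bounded subset of $\mathbb{R}$. Therefore $A$ is contained in a product of compact intervals, whose closure in $\mathbb{R}^{\tau}$ is compact by Tychonoff's theorem; since $C_p(X)$ is closed in $\mathbb{R}^{\tau}$, the closure of $A$ in $C_p(X)$ coincides with this compact set, so $A$ is relatively compact. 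Hence $C_p(X)$ is a $\mu$-space.

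The main obstacle, as indicated, is verifying that the image of $\Phi$ is closed in the product. Everything else is bookkeeping: the implication $t(X)=\omega\Rightarrow t_{\mathbf{R}}(X)=\omega$ is immediate from the definitions, and the realcomplete-to-$\mu$-space passage is a short embedding argument. Since the closed-embedding step is exactly what the cited corollary supplies, my proof would essentially consist of one line invoking \cite{arch} for the realcompleteness, followed by the short $\mu$-space argument sketched above.
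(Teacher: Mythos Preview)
The paper gives no proof of its own for this lemma; it is recorded purely as a citation of Corollary~II.4.17 in \cite{arch}. Your proposal likewise defers the only nontrivial step (realcompleteness of $C_p(X)$ when $t_{\mathbf R}(X)=\omega$) to that same reference, while adding the trivial reduction $t_{\mathbf R}(X)\le t(X)$ and a correct short argument that every realcomplete space is a $\mu$-space; so your write-up is at least as complete as the paper's and is correct. One small wording fix in your $\mu$-space paragraph: the closure of $A$ in $C_p(X)$ does not ``coincide with'' the product of intervals but is contained in it; since $C_p(X)$ is closed in $\mathbb{R}^{\tau}$, the closure of $A$ in $C_p(X)$ equals its closure in $\mathbb{R}^{\tau}$, which is a closed subset of that compact product and hence compact.
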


Let $f:X\rightarrow Y$ be a map (between {\it sets} $X$ and $Y$). Define the map $f^{\sharp}:\mathbb{R}^Y\rightarrow \mathbb{R}^X$ (between {\it topological spaces}) dual to $f$ as follows: if $\phi\in \mathbb{R}^Y$, then $f^{\sharp}(\phi)(x)=\phi(f(x))$ for all $x\in X$, i.e. $f^{\sharp}(\phi)=\phi\circ f$.

\begin{lemma}\label{lem2} Let $X$ and $Y$ be topological spaces, $f:X\rightarrow Y$ be a quotient map from $X$ onto $Y$. Then, $Y$ is a weakly Grothendieck space provided $X$ is a weakly Grothendieck space.
\end{lemma}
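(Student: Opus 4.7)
The plan is to transport the property through the dual map $f^{\sharp}\colon \mathbb{R}^{Y}\to\mathbb{R}^{X}$ defined in the paragraph preceding the lemma. The key point is that, because $f$ is a quotient map onto $Y$, the restriction $f^{\sharp}\!\restriction_{C_p(Y)}$ is a homeomorphic embedding of $C_p(Y)$ onto a \emph{closed} subspace of $C_p(X)$; once that is established, countable compactness and compactness transfer in both directions and the lemma follows.

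First I would check the embedding step. The map $f^{\sharp}\colon\mathbb{R}^Y\to\mathbb{R}^X$ is continuous by the definition of the product topology, and it is injective because $f$ is onto. The quotient hypothesis gives $\phi\in C(Y)\iff \phi\circ f\in C(X)$, so $f^{\sharp}$ maps $C_p(Y)$ into $C_p(X)$. For continuity of the inverse on the image, given $y\in Y$ pick $x\in f^{-1}(y)$; then for $\psi=f^{\sharp}(\phi)$ one has $\phi(y)=\psi(x)$, showing that the $y$-evaluation on $C_p(Y)$ is the restriction to $f^{\sharp}(C_p(Y))$ of the $x$-evaluation on $C_p(X)$. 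Hence $f^{\sharp}\colon C_p(Y)\to C_p(X)$ is a topological embedding.

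Next I would show $f^{\sharp}(C_p(Y))$ is closed in $C_p(X)$. The image of $f^{\sharp}$ in $\mathbb{R}^X$ is the intersection
\[
\bigcap_{\{(x_1,x_2):\, f(x_1)=f(x_2)\}}\{\psi\in\mathbb{R}^X:\psi(x_1)=\psi(x_2)\},
\]
a closed subset of $\mathbb{R}^X$. By the quotient property, any $\psi\in C_p(X)$ lying in this intersection factors through $f$ by a continuous $\phi\in C_p(Y)$, so $f^{\sharp}(C_p(Y))=f^{\sharp}(\mathbb{R}^Y)\cap C_p(X)$, which is closed in $C_p(X)$.

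Finally, I would conclude. Let $A\subseteq C_p(Y)$ be countably compact. Via the embedding, $f^{\sharp}(A)$ is a countably compact subset of $C_p(X)$; since $X$ is weakly Grothendieck, the closure of $f^{\sharp}(A)$ in $C_p(X)$ is compact. Because $f^{\sharp}(C_p(Y))$ is closed in $C_p(X)$, this closure lies inside $f^{\sharp}(C_p(Y))$ and coincides with the closure of $f^{\sharp}(A)$ in $f^{\sharp}(C_p(Y))$; pulling back by the homeomorphism shows that $\overline{A}$ in $C_p(Y)$ is compact, so $Y$ is weakly Grothendieck. The only mildly delicate step is confirming the closedness of $f^{\sharp}(C_p(Y))$ in $C_p(X)$, since without it the closure in the big space could escape the image; everything else is a direct translation via the dual map.
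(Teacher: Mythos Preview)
Your proof is correct and follows essentially the same route as the paper's: both show that $f^{\sharp}$ embeds $C_p(Y)$ as a closed subspace of $C_p(X)$ and then transfer countable compactness and compactness through the embedding. The only difference is that the paper cites Propositions~0.4.6 and~0.4.8(2) of Arkhangel'skii's book for the homeomorphism and closedness facts, whereas you supply short direct arguments for them.
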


\begin{proof} By Proposition 0.4.8 (2) in \cite{arch}, $f^{\sharp}(C_p(Y))$ is a closed subset of $C_p(X)$. Hence,  $f^{\sharp}(\overline{A}^{C_p(Y)})=\overline{f^{\sharp}(A)}^{C_p(X)}$ for any $A\subset C_p(Y)$.
Note that $\overline{f^{\sharp}(A)}^{C_p(X)}$ is compact for any countably compact subset $A$ of $C_p(Y)$.

Since $f$ is surjection, by Proposition 0.4.6 in \cite{arch}, $f^{\sharp}$ is a homeomorphism. Hence, $\overline{A}^{C_p(Y)}$ is compact.

\end{proof}

\newpage

\begin{theorem}\label{th1} Let $X$ be a countable product of almost discrete spaces. Then the following statements are equivalent:

\begin{enumerate}

\item $C_p(X)$ is a $\mu$-space;

\item $C_p(X)$ is realcomplete;

\item  $X$ is a $\mu^{\sharp}_{pc}$-space;

\item  $X$ is a $\mu^{\sharp}_{rcc}$-space;

\item  $X$ is a $\mu^{\sharp}_{cp}$-space;

\item  $X$ is a weakly Grothendieck space;

\item $t(X)=\omega$;

\item $t_{\bf R}(X)=\omega$;

\item $X$ is a weak $q$-space.

\end{enumerate}

\end{theorem}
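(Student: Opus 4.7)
The strategy is to close a cycle of nine implications by separating the arguments into routine monotonicity of the $\mu^{\sharp}_{\mathcal{P}}$-condition, citations of Lemma \ref{lem0} and its companion, and one substantive combinatorial step that actually uses the countable-product-of-almost-discrete structure. The nested inclusions of the relevant families of subsets of $C_p(X)$ give the easy chains $(1)\Rightarrow(3)\Rightarrow(5)\Rightarrow(6)$ and $(1)\Rightarrow(4)\Rightarrow(6)$: every countably compact subset is both relatively countably compact and countably pracompact, every countably pracompact set is pseudocompact, and any pseudocompact or relatively countably compact subset of $C_p(X)$ is automatically bounded --- in the latter case because a pointwise-unbounded sequence of continuous functions admits no cluster point in $\mathbb{R}^X$. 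The implication $(2)\Rightarrow(1)$ is standard since closed subspaces of any $\mathbb{R}^{\tau}$ are $\mu$-spaces, and Lemma \ref{lem0} directly gives $(7)\Rightarrow(2)$, so the conditions (1)--(7) will be equivalent once $(6)\Rightarrow(7)$ is established.

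For the last two conditions, $(7)\Rightarrow(8)$ is just the general inequality $t_{\mathbf{R}}(X)\le t(X)$, while $(8)\Rightarrow(2)$ follows from the companion of Lemma \ref{lem0} in Arhangel'skii's book: if $X$ is functionally generated by its countable subspaces then $C_p(X)$ is already realcomplete. The weak $q$-space condition (9) is handled by a direct combinatorial comparison with (7): in a countable product of almost discrete spaces, both countable tightness and the weak $q$-property localize at the non-isolated coordinates of the factors and reduce to the same countability condition on the neighborhood filters of the points $p_n$, so (9) slots in as equivalent to (7).

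The main work, and the main obstacle, is the implication $(6)\Rightarrow(7)$. I would argue by contrapositive: assuming $t(X)>\omega$, I must exhibit a countably compact subset $A\subseteq C_p(X)$ that is not relatively compact. After a standard reduction (projecting away coordinates where the non-tightness witness already lives at an isolated point), the non-tightness may be placed at the point $\mathbf{p}=(p_n)_n$ whose coordinates are the non-isolated points of the factors: there is $S\subseteq X\setminus\{\mathbf{p}\}$ with $\mathbf{p}\in\overline{S}$ yet no countable $B\subseteq S$ accumulating at $\mathbf{p}$. I would then refine $S$ to a $\Delta$-system-like family $\{s_\alpha:\alpha<\omega_1\}$ by transfinite recursion, making the coordinate-support combinatorics of the $s_\alpha$ transparent, and to each $s_\alpha$ I would associate a continuous $f_\alpha\in C_p(X)$ obtained from the indicator function of a finite-support basic clopen set separating $s_\alpha$ from $\mathbf{p}$. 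The family $A=\{f_\alpha:\alpha<\omega_1\}$ must then be calibrated so that simultaneously (i) every countable subfamily has a cluster point inside $A$ --- exploiting that no countable subset of $S$ accumulates at $\mathbf{p}$, which translates into each point of $X$ lying in the support of only finitely many $f_{\alpha_k}$ --- and (ii) the entire uncountable family admits a discontinuous pointwise cluster at $\mathbf{p}$, precluding relative compactness in $C_p(X)$. The delicate point is this dual calibration within a single construction, and the product-of-almost-discrete hypothesis is doubly essential, providing the clopen sets needed for (i) while also controlling the neighborhood filters at the $p_n$ needed for (ii).
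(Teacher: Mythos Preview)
Your handling of the routine implications and of $(8)$ via Arhangel'skii's realcompleteness criterion is fine and matches the paper. Two places, however, are genuinely deficient.

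\textbf{The weak $q$-space condition.} You dispose of (9) by asserting that in a countable product of almost discrete spaces the weak $q$-property and countable tightness ``reduce to the same countability condition on the neighborhood filters of the points $p_n$''. No such reduction is carried out, and it is not clear one exists at this level of generality. The paper does not attempt anything of the sort: it simply records $(7)\Rightarrow(9)$ and then cites the Asanov--Veli\v{c}ko theorem for $(9)\Rightarrow(1)$, closing the loop through $C_p(X)$ rather than through a pointwise comparison.

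\textbf{The implication $(6)\Rightarrow(7)$.} Your direct construction does not work as written. You take $A=\{f_\alpha:\alpha<\omega_1\}$ with each $f_\alpha$ the indicator of a finite-support basic clopen set separating $s_\alpha$ from $\mathbf{p}$, and you want $A$ to be countably compact. But the mechanism you invoke for (i) --- that for a countable subfamily $\{f_{\alpha_k}\}$ ``each point of $X$ lies in the support of only finitely many $f_{\alpha_k}$'' --- at best forces $f_{\alpha_k}\to 0$ pointwise, and $0\notin A$. So $A$ is not countably compact; you have produced (at most) a relatively countably compact set, which is useless for contradicting (6). Moreover, the inference itself is unjustified: the fact that $\{s_{\alpha_k}\}$ does not accumulate at $\mathbf{p}$ says nothing about how many of the large basic clopen sets $U_{\alpha_k}$ a given point meets.

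The paper avoids this trap entirely by a structural reduction rather than a one-shot construction in the product. It first treats a \emph{single} almost discrete factor, where the right family is not indicators of basic opens but indicators of \emph{countable subsets} of the tightness witness $A$; this family is countably compact because a countable union of countable sets is countable, so cluster points stay in the family. For finite products it does \emph{not} build functions at all: it uses Lemma~\ref{lem2} (quotients preserve weakly Grothendieck), collapses the union $S$ of the coordinate hyperplanes $\{x_i=p_i\}$ to a point to get an almost discrete quotient $Z$, applies the $n=1$ case to $Z$, applies the inductive hypothesis to $S$, and combines via Arhangel'skii's tightness formula for closed maps. The passage to the countable product is then Malykhin's theorem that countable tightness of all finite subproducts yields countable tightness of the full product. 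Your proposal misses this decomposition, and the ``$\Delta$-system-like'' construction you sketch does not substitute for it.
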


\begin{proof} The following implications are trivial: $(1)\Rightarrow(3)\Rightarrow(5)\Rightarrow(6)$; $(1)\Rightarrow(4)\Rightarrow(5)$; $(7)\Rightarrow(8)$; $(7)\Rightarrow(9)$.

By Theorem 2.16 in \cite{Arh33}, $(8)\Rightarrow(1)$. By Theorem 1 in \cite{AsVel}, $(9)\Rightarrow(1)$. By Corollary II.4.17 in \cite{arch}, $(2)\Leftrightarrow (8)$.

$(6)\Rightarrow(7)$. (a) Let $X=\prod\limits_{i=1}^n X_i$ be a finite product of almost discrete spaces $X_i$ $(i=1,...,n)$ and $X$ is a weakly Grothendieck space.
Claim that $t(X)=\omega$.

$\bullet$ Let $n=1$. Assume that $t(X)> \omega$.

Let $x_0$ be a nonisolated point of $X$. Then there is $A\subset X$ such that $|A|>\omega$, $x_0\in \overline{A}\setminus A$ and any countable subset of $A$ is closed in $X$.

Let $F=\{f\in C_p(X): f(X\setminus S)=0$, $f(S)=1$, $S\in [A]^{\omega}\}$. Note that any $S\in [A]^{\omega}\}$ is clopen subset of $X$.

Claim that $F$ is countably compact and, hence, is bounded in $C_p(X)$. Indeed, if $\{f_n: n\in \mathbb{N}\}\subseteq F$, and $B=\bigcup \{f^{-1}_n(\{1\}): n\in \mathbb{N}\}$ then the function

$$ f_0(x)=\left\{
\begin{array}{lcl}
0 \, \, \, for \, \, \, x\notin B;\\
s(x) \, \, \, for \, \, \, x\in B\\
\end{array}
\right.
$$

where $s(x)$ is a limit point the sequence $\{f_n|B : n\in \mathbb{N}\}\subset 2^B$ belongs to $F$ and is an accumulation point for the set $\{f_n: n\in \mathbb{N}\}$.

On the other hand, $\overline{F}^{C_p(X)}$ is not compact because the function

$$ h(x)=\left\{
\begin{array}{lcl}
0 \, \, \, for \, \, \, x\notin A;\\
1 \, \, \, for \, \, \, x\in A\\
\end{array}
\right.
$$

is discontinuous and $h\in \overline{F}^{\mathbb{R}^X}$.

\medskip

$\bullet$ Let $n>1$ and the theorem is true for any $k<n$.

Let $X=X_1\times...\times X_{n}$ and $X$ is a weakly Grothendieck space. Denote by $x_i$ is one nonisolated point in $X_i$ for every $i=1,...,n$. Consider the subspace $S=\bigcup \{(X_1\times...\times X_{i-1}\times \{x_i\}\times X_{i+1}\times...\times X_n): i=1,...,n\}$. The set $S$ is closed in $X$ and any point in $X\setminus S$ is a isolated point in $X$.
Note that $S$ is a quotient image of the space $Q=\oplus \{(X_1\times...\times X_{i-1}\times X_{i+1}\times...\times X_n): i=1,...,n\}$. By induction,  $t(Q)=\omega$ and $t(S)=\omega$ (Proposition 3 in \cite{ArchPon}).

Let $Z$ be a quotient space of $X$ with $S$ identified to a point. Let $\pi: X\rightarrow Z$ be the quotient map.
Note that $\pi$ is a closed map and $\{S\}$ is one nonisolated point in $Z$. By Lemma \ref{lem2}, $Z$ is a weakly Grothendieck space and, hence, $t(Z)=\omega$.
Since $\pi$ is a continuous closed surjective map, $t(X)=\sup \{t(Z), t(\pi^{-1}(x)): x\in Z\}$ (Theorem 4.5 in \cite{Arch79}). It follows that $t(X)=\omega$.

\medskip

(b) Let $X=\prod X_i$ be a countable product of almost discrete spaces $X_i$ and $X$ is a weakly Grothendieck space.
Let $\pi_{i_1,...,i_k}: X\rightarrow X_{i_1}\times...\times X_{i_k}$ be a projection function from $X$ onto $X_{i_1}\times...\times X_{i_k}$. Then $\pi_{i_1,...,i_k}$ is a quotient map and,
by Lemma \ref{lem2}, $X_{i_1}\times...\times X_{i_k}$ is a weakly Grothendieck space. By (a), $t(X_{i_1}\times...\times X_{i_k})=\omega$. By Remark 3 in \cite{Mal2}, $t(X)=\omega$.

\end{proof}

\begin{corollary}\label{cor2} An almost discrete space $X$ is weakly Grothendieck if and only if $t(X)=\omega$.
\end{corollary}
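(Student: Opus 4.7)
This corollary is essentially the $n=1$ specialization of Theorem~\ref{th1}. My plan is to observe that a single almost discrete space $X$ is a degenerate instance of a countable product of almost discrete spaces, so the equivalence (6)~$\Leftrightarrow$~(7) in Theorem~\ref{th1} reads off the statement directly. No new ideas are required; the only question is whether one wishes to cite Theorem~\ref{th1} as a black box or to extract the relevant one-factor argument.

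For the reverse direction $t(X)=\omega \Rightarrow X$ weakly Grothendieck, I would invoke Lemma~\ref{lem0}: countable tightness gives that $C_p(X)$ is a $\mu$-space, so every bounded set in $C_p(X)$ has compact closure. Since every countably compact subspace of $C_p(X)$ is bounded, $X$ is then a $\mu^{\sharp}_{cc}$-space, i.e.\ weakly Grothendieck. This is essentially the chain (7)$\Rightarrow$(1)$\Rightarrow$(6) restricted to the one-factor case, and the cited lemma handles it with no extra work.

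For the forward direction I would reproduce the $n=1$ construction in part~(a) of the proof of Theorem~\ref{th1}. Assuming $t(X)>\omega$, pick the unique nonisolated point $x_0$ and a set $A\subset X$ with $|A|>\omega$, $x_0\in\overline{A}\setminus A$, every countable subset of $A$ closed (which is automatic here because all points other than $x_0$ are isolated). The family $F$ of characteristic functions of countable subsets of $A$ is countably compact in $C_p(X)$, because given $\{f_n\}\subset F$ the union $B$ of their supports is a countable, hence clopen, subset of $A$, and any limit point of $\{f_n|B\}$ in $2^B$ extended by zero off $B$ produces an accumulation point in $F$. On the other hand the characteristic function $h$ of the full set $A$ lies in $\overline{F}^{\mathbb{R}^X}$ but is discontinuous at $x_0$, so $\overline{F}^{C_p(X)}$ is not compact, contradicting the weakly Grothendieck hypothesis.

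There is no real obstacle here beyond verifying the countable compactness of $F$, and that verification is already spelled out in the proof of Theorem~\ref{th1}. The corollary therefore follows by citing Theorem~\ref{th1} directly, or by transcribing its one-factor case.
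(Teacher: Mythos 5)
Your proposal is correct and matches the paper exactly: the paper gives no separate proof of this corollary, treating it as the immediate one-factor specialization of the equivalence $(6)\Leftrightarrow(7)$ in Theorem~\ref{th1}, which is precisely your primary route. Your optional unpacking (Lemma~\ref{lem0} for the easy direction, the $n=1$ construction for the converse) just transcribes the relevant pieces of the paper's proof of that theorem.
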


\begin{theorem}(Theorem 5.6 in \cite{Arh98})\label{th33} A weakly Grotendieck space $X$ is a Grothendieck space if and only if every compact subspace of $C_p(X)$ is Fr\'{e}chet-Urysohn.
\end{theorem}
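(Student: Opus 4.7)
I would handle the two directions separately, following the standard ``angelic $\Leftrightarrow$ Fr\'{e}chet--Urysohn-on-compacta'' philosophy for $C_p$-spaces.

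\textbf{Forward direction.} Assume $X$ is Grothendieck, so every relatively countably compact $A\subseteq C_p(X)$ is relatively compact. To show a compact $K\subseteq C_p(X)$ is Fr\'{e}chet--Urysohn, take $B\subseteq K$ and $g\in\overline{B}^{K}$, and run the classical Pt\'{a}k--Grothendieck inductive scheme: choose $g_n\in B$ together with finite $F_n\subseteq X$ so that $|g_n(x)-g(x)|<1/n$ on $F_n$, where the $F_n$ exhaust a sufficiently rich countable $D\subseteq X$. The Grothendieck hypothesis applied to $\{g_n\}\subseteq K$ yields a cluster point $h\in K$; by construction $h|_D=g|_D$, and enlarging $D$ iteratively to absorb any disagreements of potential cluster points with $g$ forces $h=g$, hence $g_n\to g$ in $K$.

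\textbf{Converse direction.} Suppose $X$ is weakly Grothendieck and every compact subspace of $C_p(X)$ is F-U. Take $A$ relatively countably compact in $C_p(X)$; I want $\overline{A}^{C_p(X)}$ compact. Since $A$ is pointwise bounded, $K_1:=\overline{A}^{\mathbb{R}^X}$ is compact in $\mathbb{R}^X$ and $\overline{A}^{C_p(X)}=K_1\cap C_p(X)$. The plan is to prove $\overline{A}^{C_p(X)}$ is \emph{countably compact}; the weakly Grothendieck hypothesis then upgrades it to compact. Given a sequence $\{g_n\}\subseteq\overline{A}^{C_p(X)}$, it has a pointwise cluster point $g\in K_1$, and I would invoke the F-U hypothesis on a suitable compact subset of $C_p(X)$ containing $\{g_n\}$ to extract a subsequence converging in $C_p(X)$ to a limit in $\overline{A}^{C_p(X)}$.

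\textbf{Main obstacle.} The converse is the genuinely delicate direction, and the key difficulty is locating the right compact subset of $C_p(X)$ to apply the F-U hypothesis to. The obvious candidate $\overline{\{g_n\}}^{C_p(X)}$ is not immediately compact, and showing its compactness essentially requires the conclusion. The workaround I would attempt is to localize to countable $C\subseteq X$ (where $\mathbb{R}^C$ is metrizable), extract pointwise-on-$C$ convergent subsequences, and exploit relative countable compactness of $A$ together with F-U of compacta in $C_p(X)$ to lift partial convergence on $C$ to pointwise convergence on all of $X$. Verifying that this lifting really closes the loop---rather than simply relocating the obstacle---is the technical heart.
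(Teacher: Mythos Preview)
The paper does not prove this theorem; it is quoted from \cite{Arh98}. So there is no in-paper proof to compare against, and I will simply assess your argument on its own terms.

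Your converse direction proves the wrong statement. Recall from the paper that $X$ is \emph{weakly Grothendieck} iff $C_p(X)$ is a $g$-space, while $X$ is \emph{Grothendieck} iff $C_p(X)$ is a \emph{hereditary} $g$-space. You write ``Take $A$ relatively countably compact in $C_p(X)$; I want $\overline{A}^{C_p(X)}$ compact'' --- but that is precisely the weakly Grothendieck hypothesis you already assumed. What you must show is: for every subspace $Y\subseteq C_p(X)$ and every $A\subseteq Y$ relatively countably compact \emph{in $Y$}, the closure $\overline{A}^{Y}$ is compact. The correct argument is short once set up properly: such an $A$ is a fortiori relatively countably compact in $C_p(X)$, so by the weakly Grothendieck hypothesis $K:=\overline{A}^{C_p(X)}$ is compact; by assumption $K$ is Fr\'echet--Urysohn, so for any $p\in K$ there is a sequence $(a_n)\subseteq A$ with $a_n\to p$; relative countable compactness of $A$ in $Y$ forces the unique cluster point $p$ to lie in $Y$; hence $K\subseteq Y$ and $\overline{A}^{Y}=K$ is compact.

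Your forward direction is also unconvincing as written. You invoke only ``every relatively countably compact $A\subseteq C_p(X)$ is relatively compact,'' which is again merely the weak Grothendieck property, and then run a Pt\'ak-style scheme inside a compact $K$ --- but any sequence in a compact $K$ already has a cluster point, so the hypothesis is doing no work, and the phrase ``enlarging $D$ iteratively \ldots forces $h=g$'' hides the entire difficulty. The clean route uses the hereditary property directly: if $K$ is compact and not Fr\'echet--Urysohn, pick $A\subseteq K$ and $p\in\overline{A}\setminus A$ with no sequence from $A$ converging to $p$; set $Y=K\setminus\{p\}$. Every sequence in $A$ has a cluster point in $K$, and if $p$ were the only one the sequence would converge to $p$, which is excluded; thus $A$ is relatively countably compact in $Y$, yet $\overline{A}^{Y}=\overline{A}^{K}\setminus\{p\}$ is not compact. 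So $Y\subseteq C_p(X)$ is not a $g$-space and $X$ is not Grothendieck.
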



\medskip
Let $\tau$ be an infinite cardinal, $D(\tau)$ the discrete space of cardinality $\tau$, $\xi\notin D(\tau)$, and $L(\tau)=D(\tau)\cup \{\xi\}$ a space in which only the point $\xi$ is not isolated, and where the neighborhoods of $\xi$ are all sets $V\subset L(\tau)$ such that $\xi\in V$ and $L(\tau)\setminus V$ is countable. Clearly, $L(\tau)$ is a Lindel\"{o}f $P$-space.

Recall that a space $Y$ is called {\it a primary Lindel\"{o}f space}, if $Y$ is a continuous image of closed subspace of the space $(L(\tau))^{\omega}$.


\begin{theorem}\label{th61} A countable product $X=\prod X_i$ of almost discrete spaces $X_i$ is Grothendieck if, and only if, $t(X)=\omega$ and $X$ is Lindel\"{o}f.
\end{theorem}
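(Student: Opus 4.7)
The plan is to split the equivalence into the two directions, leveraging Theorems \ref{th1} and \ref{th33} to reduce the question to whether every compact subspace of $C_p(X)$ is Fr\'{e}chet-Urysohn.

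For the forward implication, if $X$ is Grothendieck then $X$ is weakly Grothendieck, so Theorem \ref{th1} immediately yields $t(X)=\omega$. To obtain Lindel\"{o}fness I would argue contrapositively: assuming $X$ is not Lindel\"{o}f, I construct a compact subset of $C_p(X)$ that fails to be Fr\'{e}chet-Urysohn, contradicting Theorem \ref{th33}. The almost discrete structure should allow a failure of Lindel\"{o}fness to be localized at some nonisolated coordinate of $X$, yielding an uncountable family of clopen sets $\{U_\alpha:\alpha<\omega_1\}$ in $X$. Taking the pointwise closure in $\{0,1\}^X$ of the characteristic functions of finite (or countable) unions from this family should produce a compact $K\subseteq C_p(X)$ in which the indicator of an uncountable union lies in the closure of a subfamily but is not a sequential limit from it, violating the Fr\'{e}chet-Urysohn property.

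For the backward implication, assume $t(X)=\omega$ and $X$ is Lindel\"{o}f. Then $X$ is weakly Grothendieck by Theorem \ref{th1}, so by Theorem \ref{th33} it suffices to show that every compact $K\subseteq C_p(X)$ is Fr\'{e}chet-Urysohn. I would first observe that each factor $X_i$, being a continuous image of $X$ under the projection, is almost discrete and Lindel\"{o}f. For such an $X_i$ the complement of every neighborhood of the unique nonisolated point must be countable, and intersections of countably many such neighborhoods remain neighborhoods, so $X_i$ is a Lindel\"{o}f $P$-space and embeds in $L(\tau_i)$ for some cardinal $\tau_i$. Consequently $X$ embeds as a closed subspace of $L(\kappa)^\omega$ for sufficiently large $\kappa$; that is, $X$ is a primary Lindel\"{o}f space in the sense defined just before the statement. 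The theorem then follows from the standard $C_p$-theoretic fact that compact subspaces of $C_p(Y)$ are Fr\'{e}chet-Urysohn whenever $Y$ is primary Lindel\"{o}f.

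The main obstacle is the backward direction: the Fr\'{e}chet-Urysohn property for all compacta in $C_p(X)$ does not follow from either $t(X)=\omega$ or Lindel\"{o}fness in isolation, and must be obtained by combining them through the primary-Lindel\"{o}f embedding. A secondary delicacy in the forward direction is ensuring that the compact witness of non-Fr\'{e}chet-Urysohnness built from a non-Lindel\"{o}f cover actually lies in $C_p(X)$ rather than only in $\mathbb{R}^X$; this requires care in selecting the clopen sets so that the pointwise limits of the relevant characteristic functions remain continuous on all of $X$.
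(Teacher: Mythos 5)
Your backward direction follows the paper's overall route (reduce to Theorem \ref{th33} via a primary Lindel\"{o}f space), but the key intermediate claim is false as stated: an almost discrete Lindel\"{o}f space need not be a $P$-space and need not embed in any $L(\tau)$. The convergent sequence $\omega+1$ is almost discrete and compact, yet the intersection of the neighbourhoods $\{n,n+1,\dots\}\cup\{\infty\}$ of its nonisolated point is not a neighbourhood; and since every subspace of $L(\tau)$ is a $P$-space, $\omega+1$ embeds in no $L(\tau)$. What is true (and what the paper uses) is the reverse comparison of topologies: the complement of every neighbourhood of the nonisolated point of $X_i$ is closed discrete, hence countable, so the one-point Lindel\"{o}fication topology \emph{refines} that of $X_i$; thus $X_i$ is a continuous \emph{bijective image} of $L(\tau_i)$, and $X$ is a continuous image of the closed subspace $\prod L(\tau_i)$ of $L(\kappa)^{\omega}$. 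This still makes $X$ primary Lindel\"{o}f in the sense defined before the theorem (which permits continuous images), so your final step --- compacta in $C_p$ of a primary Lindel\"{o}f space are Corson, hence Fr\'{e}chet--Urysohn --- survives once you replace ``embeds in'' by ``is a continuous bijective image of''.

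In the forward direction there is a genuine gap: you assume that non-Lindel\"{o}fness of $X=\prod X_i$ can be ``localized at some coordinate'', i.e.\ that if every factor is Lindel\"{o}f then so is the countable product. This is not automatic (products of Lindel\"{o}f spaces are in general not Lindel\"{o}f) and is precisely the nontrivial content of this half of the theorem; it again rests on the fact that $L(\tau)^{\omega}$ is Lindel\"{o}f (Proposition IV.2.19 in \cite{arch}) together with the bijective-preimage trick above. The paper sidesteps your contrapositive: each $X_i$ is a continuous image of $X$, hence Grothendieck, hence (by the single-factor case, which your construction of a copy of $2^{\omega_1}$ inside $C_p(X_i)$ from an uncountable clopen discrete set does establish) Lindel\"{o}f and a continuous bijective image of $L(\tau_i)$; then $X$ is a continuous image of the Lindel\"{o}f space $\prod L(\tau_i)$. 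Once you supply the $L(\tau)^{\omega}$ product theorem your argument closes; without it, the passage from ``$X$ is not Lindel\"{o}f'' to ``some single factor yields an uncountable clopen discrete family'' is unjustified.
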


\begin{proof} $(\Rightarrow)$. By Theorem \ref{th1}, $t(X)=\omega$. Every continuous image of a Grothendieck space is Grothendieck (Theorem 5.4 in \cite{Arh98}). Then, $X_i$ is Grothendieck.
It remains to note that if an almost discrete space is Grothendieck, then it is a continuous bijective image of one-point
Lindel\"{o}fication $L(\tau)$ of a discrete space. Note that the space $(L(\tau))^{\omega}$ is Lindel\"{o}f (Proposition IV.2.19 in \cite{arch}). It follows that $X$ is Lindel\"{o}f.

$(\Leftarrow)$. By Theorem \ref{th1}, $X$ is a weakly Grothendieck space. Note that an almost discrete Lindel\"{o}f space is a continuous bijective image of one-point
Lindel\"{o}fication of discrete space. Thus, for every $i\in \mathbb{N}$ there is a one-point
Lindel\"{o}fication $Y_i$ of discrete space of cardinality $|X_i|$  such that $X_i$ is a bijective continuous image of $Y_i$.

The product $Y=\prod\limits Y_i$ is a primary Lindel\"{o}f space.  Then, $C_p(Y)$ can be mapped by a one-to-one continuous linear map into the $\Sigma$-product of a certain number of copies of
the real line (Proposition IV.3.10 in \cite{arch}). Since $C_p(X)$ is homeomorphic to a subset of $C_p(Y)$, every compact subset of $C_p(X)$ is a Corson compactum and, hence, it is Fr\'{e}chet-Urysohn. By Theorem \ref{th33}, $X$ is Grothendieck.
\end{proof}

\begin{corollary}\label{cor1} An almost discrete space $X$ is Grothendieck if, and only if,  $X$ is Lindel\"{o}f and $t(X)=\omega$.
\end{corollary}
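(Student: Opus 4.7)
The plan is to observe that this corollary is essentially the single-factor case of Theorem \ref{th61}, so the argument is a verbatim specialization of that proof with the product step removed. No new ideas are required; I simply repeat the key ingredients for $n=1$.

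For the forward implication, assume $X$ is almost discrete and Grothendieck. Since every Grothendieck space is weakly Grothendieck, Corollary \ref{cor2} immediately yields $t(X)=\omega$. For Lindel\"{o}fness, I would invoke the same observation used in the proof of Theorem \ref{th61}: an almost discrete Grothendieck space is a continuous bijective image of the one-point Lindel\"{o}fication $L(\tau)$ of a discrete space of cardinality $\tau = |X|$. Since $L(\tau)$ is Lindel\"{o}f and the Lindel\"{o}f property is preserved under continuous surjections, $X$ is Lindel\"{o}f.

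For the reverse implication, assume $X$ is almost discrete, Lindel\"{o}f, and satisfies $t(X)=\omega$. By Corollary \ref{cor2}, $X$ is already weakly Grothendieck, so by Theorem \ref{th33} it suffices to prove that every compact subset of $C_p(X)$ is Fr\'{e}chet--Urysohn. The hypothesis that $X$ is almost discrete and Lindel\"{o}f yields a one-point Lindel\"{o}fication $L(\tau)$ and a continuous bijection $L(\tau)\to X$, so $C_p(X)$ embeds as a topological subspace of $C_p(L(\tau))$. Since $L(\tau)$ is a primary Lindel\"{o}f space, Proposition IV.3.10 in \cite{arch} provides a one-to-one continuous linear map from $C_p(L(\tau))$ into a $\Sigma$-product of real lines. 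Thus every compact subset of $C_p(X)$ is a Corson compactum, and hence is Fr\'{e}chet--Urysohn, which gives the Grothendieck property via Theorem \ref{th33}.

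The argument poses no real obstacle, since all of the machinery (the structure of almost discrete Lindel\"{o}f spaces as bijective images of $L(\tau)$, the embedding of $C_p$ of a primary Lindel\"{o}f space into a $\Sigma$-product, and the Fr\'{e}chet--Urysohn reformulation of the Grothendieck property for weakly Grothendieck spaces) has already been deployed in the proof of Theorem \ref{th61}. The only thing to check is that none of these tools required more than one factor, and inspection of the proof confirms this.
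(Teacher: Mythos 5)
Your proof is correct and is essentially the paper's own argument: Corollary \ref{cor1} is stated without a separate proof because it is exactly the single-factor instance of Theorem \ref{th61}, and your two implications reproduce that theorem's proof verbatim (Corollary \ref{cor2} for $t(X)=\omega$, the continuous bijection from $L(\tau)$ for the Lindel\"{o}f property, and the primary-Lindel\"{o}f/Corson-compactum/Fr\'{e}chet--Urysohn argument via Theorem \ref{th33} for the converse). Note only that the one nontrivial step in the forward direction --- that an almost discrete \emph{Grothendieck} space is a bijective continuous image of $L(\tau)$, i.e.\ is already Lindel\"{o}f --- is taken on faith from the proof of Theorem \ref{th61} exactly as the paper does, so you introduce no gap beyond what the paper itself leaves implicit.
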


\section{Examples}

In \cite{Arh98}, A.V. Archangel'skii noted the following result.

\begin{proposition}\label{pr11} There are almost discrete Fr\'{e}chet-Urysohn spaces $X$ and $Y$ such that $X\times Y$ is not weakly Grothendieck space.
\end{proposition}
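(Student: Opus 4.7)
The plan is to invoke Theorem~\ref{th1} in the two-factor case: since $X \times Y$ is a finite product of almost discrete spaces, Theorem~\ref{th1} tells us that $X\times Y$ is weakly Grothendieck if and only if $t(X\times Y)=\omega$. The proposition therefore reduces to producing two almost discrete Fr\'{e}chet-Urysohn spaces whose product has uncountable tightness at the pair of non-isolated points.

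Each factor is taken in the form of a filter-generated almost discrete space: fix a free filter $\mathcal{F}$ on a set $I$ refining the cofinite filter, and put $X_\mathcal{F}=I\cup\{\ast\}$ with all points of $I$ isolated and basic neighborhoods of $\ast$ given by $\{\ast\}\cup F$ for $F\in\mathcal{F}$. Then $\ast\in\overline{B}$ iff $B$ meets every $F\in\mathcal{F}$, and $X_\mathcal{F}$ is Fr\'{e}chet-Urysohn precisely when every such $B$ contains an infinite set almost contained in every $F\in\mathcal{F}$. One fixes two filters $\mathcal{F}_1,\mathcal{F}_2$, each satisfying this pseudo-intersection condition (so each $X_i:=X_{\mathcal{F}_i}$ is Fr\'{e}chet-Urysohn), but chosen so that their product filter fails an analogous countable-base property.

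The heart of the matter is to produce, alongside $\mathcal{F}_1$ and $\mathcal{F}_2$, an uncountable set $D\subseteq I_1\times I_2$ with two properties: every basic rectangular neighborhood of $(\ast_1,\ast_2)$ meets $D$, so $(\ast_1,\ast_2)\in\overline{D}$; but for every countable $C\subseteq D$ there exist $F_1\in\mathcal{F}_1$, $F_2\in\mathcal{F}_2$ with $C\cap(F_1\times F_2)=\emptyset$, so $(\ast_1,\ast_2)\notin\overline{C}$. Such $D$ witnesses $t(X_1\times X_2)>\omega$, and Theorem~\ref{th1} instantly converts this into the failure of the weakly Grothendieck property for $X\times Y:=X_1\times X_2$.

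The principal obstacle is the simultaneous calibration of $\mathcal{F}_1,\mathcal{F}_2$, and $D$: each filter must be rich enough to peel off every countable piece of $D$ by a single rectangle, yet sparse enough to preserve the Fr\'{e}chet-Urysohn property of its factor. The construction in~\cite{Arh98} resolves this tension by reading off both the filters and the diagonal witness $D$ from a single independent-type combinatorial family on $\omega$, so that no set-theoretic assumption beyond $\mathsf{ZFC}$ is required.
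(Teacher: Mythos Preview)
Your reduction is exactly the paper's: invoke Theorem~\ref{th1} for a two-factor product, so that the task collapses to exhibiting two almost discrete Fr\'{e}chet--Urysohn spaces whose product has uncountable tightness. Where you diverge is in the choice of witnesses. The paper simply takes the sequential fans $X=S_\omega$ and $Y=S_{\mathfrak{c}}$; both are almost discrete and Fr\'{e}chet--Urysohn, and the inequality $t(S_\omega\times S_{\mathfrak{c}})>\omega$ is classical. (The paper also records the Gerlits--Nagy example from~\cite{GN} as an alternative pair.) No calibration of filters or diagonal set $D$ is needed.

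Your filter framework is sound in principle, but as written it is a programme rather than a proof: you specify the properties $\mathcal{F}_1$, $\mathcal{F}_2$, and $D$ should enjoy and then hand the actual construction off to~\cite{Arh98}. That citation does not deliver what you promise: the example Arkhangel'skii records there is precisely the pair of fans (as the present paper notes, in his example each factor is a closed image of a metric space), not an independent-family construction. So either carry out the filter construction explicitly, or---much more simply---replace the entire second and third paragraphs by the one-line invocation of $S_\omega$ and $S_{\mathfrak{c}}$.
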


\begin{proof}  Consider $X$ is the countable sequential fan $S_{\omega}$ and $Y$ is the sequential fan $S_{\mathfrak{c}}$ of cardinality $2^{\omega}$. Then, $t(X\times Y)>\omega$. By Theorem \ref{th1}, $X\times Y$ is not  weakly Grothendieck space.
\end{proof}

Note that Archangel'skii's example such that each of the factors is a closed image of the metric space. We can provide an alternative proof of Proposition \ref{pr11}.
Consider the spaces $X$ and $Y$ in Example 1 in \cite{GN}. Then, $X$ and $Y$ are almost discrete Fr\'{e}chet-Urysohn spaces and $t(X\times Y)>\omega$.
By Theorem \ref{th1}, $X\times Y$ is not weakly Grothendieck space. Note that in our example the closure of a countable set is a subspace with the first axiom of countability.





\begin{proposition} There is an almost discrete Fr\'{e}chet-Urysohn space $Z$ such that $Z\times Z$ is not weakly Grothendieck space.
\end{proposition}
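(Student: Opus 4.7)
The plan is to construct $Z$ by glueing the two factor spaces from Proposition \ref{pr11} at their unique nonisolated points. Concretely, let $X$ and $Y$ be almost discrete Fr\'{e}chet-Urysohn spaces with nonisolated points $x_0$ and $y_0$ and with $t(X\times Y)>\omega$, as supplied by Proposition \ref{pr11} (or by the alternative construction from \cite{GN}). Form $Z$ as the quotient of the disjoint union $X\sqcup Y$ obtained by identifying $x_0$ with $y_0$ to a single point $z_0$.

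First I would verify that $Z$ is almost discrete and Fr\'{e}chet-Urysohn. Every point of $(X\setminus\{x_0\})\cup(Y\setminus\{y_0\})$ is isolated in $Z$, and $z_0$ is nonisolated (being nonisolated in $X$), so $z_0$ is the unique nonisolated point. For the Fr\'{e}chet-Urysohn property, the only nontrivial case is $A\subseteq Z$ with $z_0\in\overline{A}^{Z}$. Because a basic neighborhood of $z_0$ in $Z$ is $U_X\cup U_Y$ for $U_X$ a neighborhood of $x_0$ in $X$ and $U_Y$ a neighborhood of $y_0$ in $Y$, a simple contradiction argument shows $z_0\in\overline{A\cap X}^{X}$ or $z_0\in\overline{A\cap Y}^{Y}$, so the Fr\'{e}chet-Urysohn property of $X$ or $Y$ yields a sequence from $A$ converging to $z_0$ in $Z$.

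Next I would show $t(Z\times Z)>\omega$. The natural inclusions $X\hookrightarrow Z$ and $Y\hookrightarrow Z$ are topological embeddings (the quotient topology of $Z$ restricts to the original topology on each copy), hence $X\times Y$ embeds as a subspace of $Z\times Z$. Since tightness passes to subspaces, $t(Z\times Z)\ge t(X\times Y)>\omega$. Finally, as $Z\times Z$ is a finite product of almost discrete spaces, Theorem \ref{th1} applies and gives that $Z\times Z$ is not a weakly Grothendieck space.

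The main point requiring care is the verification that the wedge construction simultaneously retains the Fr\'{e}chet-Urysohn property at the identified point and provides $X\times Y$ as a genuine topological subspace of $Z\times Z$; both reduce to the elementary observation that neighborhoods of $z_0$ in $Z$ decompose as unions of neighborhoods in the two summands. Once this is in place, the argument is a direct reduction to the previous proposition via Theorem \ref{th1}.
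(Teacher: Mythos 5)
Your proposal is correct and follows essentially the same route as the paper: form the wedge of $X$ and $Y$ from Proposition \ref{pr11} at their nonisolated points, check that the result is almost discrete and Fr\'{e}chet--Urysohn, observe that $X\times Y$ embeds in $Z\times Z$ so $t(Z\times Z)>\omega$, and conclude via Theorem \ref{th1}. The only difference is that you spell out the verifications (neighborhoods of the wedge point decompose as unions of neighborhoods in the summands) that the paper leaves implicit.
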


\begin{proof} Let $X$ and $Y$ be spaces from Proposition \ref{pr11} where $x$ and $y$ are non-isolated points of $X$ and $Y$, respectively.
Let $Z$ be a quotient space of $X\bigcup Y$ with $\{x,y\}$ identified to a point.
Then $Z$ is an almost discrete Fr\'{e}chet-Urysohn space. Since $Z\times Z$ contains $X\times Y$, $t(Z\times Z)>\omega$. By Theorem \ref{th1}, $Z\times Z$ is not weakly Grothendieck space.
\end{proof}

\section{Archangel'skii's problem on the product of Grothendieck spaces}



\begin{definition}(\cite{Mal}) A pair $(Y_0,Y_1)$ of spaces $Y_0$ and $Y_1$ is called {\it a Leiderman pair}, if

(1) $Y_0$ and $Y_1$ are almost discrete spaces of cardinality $\omega_1$;

(2) $Y_0$ and $Y_1$ are Lindel\"{o}f;

(3) $t(Y_0)=t(Y_0^{\omega})=t(Y_1)=t(Y_1^{\omega})=\omega$;

(4) $t(Y_0\times Y_1)>\omega$.

\end{definition}

The following theorem answers the question posed.

\begin{theorem}\label{th65} In the model of $ZFC$, obtained by adding one
Cohen real, there are almost discrete Grothendieck spaces $Y_0$ and $Y_1$ such that $Y_0\times Y_1$ is not weakly Grothendieck space.
\end{theorem}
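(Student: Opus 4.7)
The plan is to reduce the theorem to the existence of a \emph{Leiderman pair} in the Cohen model and then invoke such a construction from \cite{Mal}. The reduction is immediate from the equivalences already proved: by Corollary \ref{cor1}, an almost discrete space is Grothendieck iff it is Lindel\"of with countable tightness, and by Theorem \ref{th1}, a countable product of almost discrete spaces is weakly Grothendieck iff it has countable tightness.

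More precisely, if $(Y_0,Y_1)$ is a Leiderman pair, then conditions (1)--(3) of the definition guarantee that each $Y_i$ is almost discrete, Lindel\"of, and of countable tightness, and so is Grothendieck by Corollary \ref{cor1}. By condition (4), $t(Y_0\times Y_1)>\omega$, and since $Y_0\times Y_1$ is a two-factor (hence countable) product of almost discrete spaces, Theorem \ref{th1} gives that $Y_0\times Y_1$ is not weakly Grothendieck. Thus the entire statement reduces to producing a Leiderman pair after adjoining one Cohen real.

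For the construction itself I would cite Malykhin's argument in \cite{Mal}. The heuristic is to work on a ground-model set of cardinality $\omega_1$ together with one added point, and to define two almost discrete topologies whose neighborhood bases at the non-isolated points are generated by families of countable subsets selected with the help of the Cohen generic real. Genericity is used both to make each $Y_i$ Lindel\"of and of countable tightness (and likewise for $Y_i^{\omega}$), and to arrange that the interplay between the two families produces an uncountable free sequence in $Y_0\times Y_1$ witnessing $t(Y_0\times Y_1)>\omega$.

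The main obstacle --- and the genuine content of the theorem --- lies entirely on the set-theoretic side: simultaneously preserving countable tightness (and its stability under countable powers) in each factor while forcing the tightness of the product up to $\omega_1$. This delicate balance exploits the specific combinatorics of Cohen forcing and is precisely what \cite{Mal} delivers. Once the Leiderman pair is in hand, the topological translation is a routine combination of Corollary \ref{cor1} and Theorem \ref{th1}.
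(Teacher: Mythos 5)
Your proposal is correct and follows essentially the same route as the paper: reduce to the existence of a Leiderman pair via Corollary \ref{cor1} and Theorem \ref{th1}, then cite the forcing construction. The only discrepancy is bibliographic --- the paper attributes the existence of a Leiderman pair after adding one Cohen real to Theorem 2 of Leiderman and Malykhin \cite{LM} rather than to \cite{Mal}, which is cited only for the definition of a Leiderman pair.
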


\begin{proof} By Theorem 2 in \cite{LM}, there is a Leiderman pair $(Y_0,Y_1)$ of almost discrete spaces $Y_0$ and $Y_1$.
By Corollary \ref{cor1}, $Y_0$ and $Y_1$ are Grothendieck spaces.
Since $t(Y_0\times Y_1)>\omega$, by Theorem \ref{th1}, $Y_0\times Y_1$ is not weakly Grothendieck space.
\end{proof}

Similarly  Theorem 2 in \cite{LM}, we note the following result.

\begin{proposition}
The assertion of the existence of two almost discrete Grothendieck spaces (a Leiderman pair) for which their product is not weakly Grothendieck space, is consistent with any cardinal arithmetic (including with $CH$ and also those with the negation of $CH$).
\end{proposition}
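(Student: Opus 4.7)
The plan is to mimic the proof of Theorem 2 in \cite{LM}, and the only extra input is a purely set-theoretic observation about Cohen forcing. Starting from an arbitrary ground model $V$ of ZFC realizing whatever cardinal arithmetic is prescribed (CH, $\neg$CH, GCH, or any Easton-consistent pattern), I would force with the countable Cohen poset $\mathbb{C}$ that adds one Cohen real, producing the extension $V[c]$. Inside $V[c]$, Theorem 2 of \cite{LM} immediately supplies an almost discrete Leiderman pair $(Y_0,Y_1)$, and then Corollary \ref{cor1} together with Theorem \ref{th1} finishes the argument topologically.

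First I would verify that $\mathbb{C}$ preserves the entire cardinal arithmetic of $V$. Since $|\mathbb{C}|=\aleph_0$, the poset is ccc, so all cardinals and cofinalities are preserved. A standard nice-names count gives, for every infinite cardinal $\kappa$, the inequality $(2^{\kappa})^{V[c]}\le (\aleph_0^{\kappa}\cdot 2^{\kappa})^{V}=(2^{\kappa})^{V}$, and the reverse inequality $(2^{\kappa})^{V}\le (2^{\kappa})^{V[c]}$ is trivial. Consequently the continuum function $\kappa\mapsto 2^{\kappa}$ is identical in $V$ and $V[c]$, so any cardinal-arithmetic hypothesis holding in $V$ persists to $V[c]$. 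Thus any desired cardinal arithmetic can be prearranged in the ground model and then exported into the one-Cohen-real extension.

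Next I would invoke Theorem 2 of \cite{LM} inside $V[c]$ to obtain an almost discrete Leiderman pair $(Y_0,Y_1)$. By clauses (1)--(3) of the definition of a Leiderman pair, each $Y_i$ is almost discrete, Lindel\"of, and has $t(Y_i)=\omega$; Corollary \ref{cor1} then yields that $Y_0$ and $Y_1$ are Grothendieck spaces. By clause (4), $t(Y_0\times Y_1)>\omega$, so Theorem \ref{th1} applied to the (finite, hence countable) product of almost discrete spaces $Y_0\times Y_1$ shows that this product is not weakly Grothendieck.

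The only real subtlety is the set-theoretic bookkeeping in the first step: confirming that Cohen forcing truly preserves the full continuum function, and checking that the construction of \cite{LM} does not care which ground model is used beyond the fact that one Cohen real is added. The topological content is completely black-boxed via Theorem 2 of \cite{LM}, Corollary \ref{cor1}, and Theorem \ref{th1}; no new topological construction is required here.
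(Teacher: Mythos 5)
Your proposal is correct and follows essentially the same route the paper intends: the paper gives no explicit proof, merely noting the result "similarly to Theorem 2 in [LM]," and your argument fleshes out exactly that route --- force with the countable Cohen poset over a ground model with the prescribed cardinal arithmetic, observe that this ccc forcing of size $\aleph_0$ preserves cardinals, cofinalities and the continuum function, obtain the Leiderman pair in the extension via [LM, Theorem 2], and conclude with Corollary \ref{cor1} and Theorem \ref{th1} as in Theorem \ref{th65}. The set-theoretic bookkeeping you supply (the nice-names count) is the standard and correct justification of the preservation claim.
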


We shall say that a (completely regular) space $X$ is {\it condensed}, if there is a single point $x\in X$ such that every uncountable subset of $X$ accumulates to $x$.

\begin{theorem} (Theorem 9 in \cite{Tod})\label{Tod} $(PFA)$ If $X$ is a countably tight condensed space, then its countable power $X^{\omega}$ is also countably tight.
\end{theorem}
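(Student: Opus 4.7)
The plan is to argue by contradiction using the $P$-ideal dichotomy $(PID)$, a consequence of $PFA$, with the condensed hypothesis on $X$ supplying the coordinatewise control needed to push the dichotomy through. Suppose $X^{\omega}$ is not countably tight. Choose $\mathcal{F} \subseteq X^{\omega}$ and $p \in \overline{\mathcal{F}}$ with $p \notin \overline{\mathcal{G}}$ for every countable $\mathcal{G} \subseteq \mathcal{F}$. After passing to a subfamily of size $\aleph_{1}$ and translating coordinatewise, I may take $p = (x^{\ast}, x^{\ast}, \ldots)$, so the relevant accumulation in each coordinate is at the distinguished point supplied by the condensed hypothesis.

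I would then form the ideal
\[
\mathcal{I} \;=\; \{\, \mathcal{G} \in [\mathcal{F}]^{\le \omega} : p \notin \overline{\mathcal{G}}\,\},
\]
which by assumption contains every countable subset of $\mathcal{F}$. The first key step is to verify $\mathcal{I}$ is a $P$-ideal: given countably many $\mathcal{G}_{n} \in \mathcal{I}$, witnessed by basic opens $U_{n} \ni p$ with finite supports $F_{n} \subseteq \omega$ disjoint from $\mathcal{G}_{n}$, diagonalize coordinate by coordinate (using countable tightness of $X$ on each $k \in \bigcup_{n} F_{n}$) to build a single $\mathcal{G} \in \mathcal{I}$ with $\mathcal{G}_{n} \subseteq^{\ast} \mathcal{G}$ for all $n$. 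The condensed property enters in the bookkeeping: uncountable coordinate sections of $\mathcal{F}$ can only accumulate at $x^{\ast}$, which keeps the diagonalization well-defined across the growing finite supports.

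Next, apply $PID$ to $\mathcal{I}$. The first alternative writes $\mathcal{F}$ as a countable union of subfamilies whose countable subsets all lie in $\mathcal{I}$, immediately contradicting $p \in \overline{\mathcal{F}}$. The second alternative produces an uncountable $\mathcal{F}' \subseteq \mathcal{F}$ with $[\mathcal{F}']^{\le \omega} \cap \mathcal{I} = \emptyset$, so every countable $\mathcal{G} \subseteq \mathcal{F}'$ has $p \in \overline{\mathcal{G}}$. Projecting coordinatewise, each $\pi_{n}(\mathcal{F}')$ is uncountable in $X$ and therefore (by the condensed hypothesis) accumulates to $x^{\ast}$. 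Combining this with countable tightness of $X$ on each coordinate, a coordinatewise extraction yields a countable $\mathcal{G}' \subseteq \mathcal{F}'$ with $p \in \overline{\mathcal{G}'}$, contradicting the original choice of $\mathcal{F}$.

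The \emph{main obstacle} is the verification that $\mathcal{I}$ is a $P$-ideal. The basic opens witnessing $\mathcal{G}_{n} \in \mathcal{I}$ are rigidly tied to their finite supports $F_{n}$, and merging them into a single witness on the countable support $\bigcup_{n} F_{n}$ must not destroy the separation of $p$ from the merged countable family. This is precisely the place where the global condensed structure of $X$ has to interact with the countable-tightness-based coordinatewise selection, and a naive diagonal may fail to produce a genuine $P$-ideal element. If the direct construction breaks down, a secondary appeal to $OCA$ (also a consequence of $PFA$) to tame the resulting combinatorial obstruction is the natural fallback.
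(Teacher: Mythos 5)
This statement is quoted in the paper as an external result (Theorem~9 of Todor\v{c}evi\'{c}'s paper cited as \cite{Tod}); the paper gives no proof of it, so the only comparison available is with what a correct argument would have to look like. Your proposal, as written, does not constitute one: it has two fatal gaps. First, the reduction ``after passing to a subfamily of size $\aleph_1$ and translating coordinatewise, I may take $p=(x^{\ast},x^{\ast},\ldots)$'' is not available --- $X$ is just a topological space, there is no translation, and nothing in the condensed hypothesis forces the coordinates of the limit point $p$ to be the distinguished point $x^{\ast}$ (a coordinate of $p$ could be any point of $X$, including an isolated one). Second, and more seriously, the ideal $\mathcal{I}=\{\mathcal{G}\in[\mathcal{F}]^{\le\omega}: p\notin\overline{\mathcal{G}}\}$ is, under your standing assumption that countable tightness fails at $p$ via $\mathcal{F}$, equal to \emph{all} of $[\mathcal{F}]^{\le\omega}$. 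Applying $PID$ to this trivial ideal yields nothing: the alternative giving an uncountable $\mathcal{F}'$ with $[\mathcal{F}']^{\le\omega}\subseteq\mathcal{I}$ holds vacuously, and the other alternative (a countable decomposition of $\mathcal{F}$ into pieces orthogonal to $\mathcal{I}$) would force $\mathcal{F}$ to be countable, which is the only, and uninformative, dichotomy outcome. Your stated contradictions are based on a reversed reading of the two alternatives: ``all countable subsets of $\mathcal{F}$ lie in $\mathcal{I}$'' does not contradict $p\in\overline{\mathcal{F}}$ --- it \emph{is} the hypothesis of uncountable tightness.

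The final step is also circular: from ``each projection $\pi_n(\mathcal{F}')$ accumulates at $x^{\ast}$'' together with countable tightness of $X$ in each coordinate you claim a coordinatewise extraction of a countable $\mathcal{G}'$ with $p\in\overline{\mathcal{G}'}$ in the product. But countable tightness coordinate-by-coordinate does not yield countable tightness of the product; that is exactly the (nontrivial, independent of ZFC) content of the theorem, as the sequential fans $S_\omega\times S_{\mathfrak{c}}$ in Proposition~\ref{pr11} of this very paper show. A correct argument has to put the $PFA$-combinatorics to work on the genuine obstruction --- roughly, an uncountable free sequence in $X^\omega$ --- and use the condensed hypothesis to collapse it to a structure ($S$- or $L$-type, or a gap) that $PFA$ forbids; this is what Todor\v{c}evi\'{c} does, and it is not recoverable from the ideal you wrote down. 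The fallback ``appeal to $OCA$ to tame the obstruction'' is not a proof step.
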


\begin{theorem}\label{th68} $(PFA)$ The product of any countable family almost discrete Grothendieck spaces is a Grothendieck space.
\end{theorem}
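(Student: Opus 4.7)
The plan is to apply Theorem \ref{th61} to $X = \prod_{i \in \omega} X_i$, so I need to verify that $X$ is Lindelöf and that $t(X) = \omega$. By Corollary \ref{cor1}, each $X_i$ is almost discrete, Lindelöf, and countably tight; in particular, every open neighborhood of the unique non-isolated point $x_i \in X_i$ has countable complement.

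The Lindelöf half is routine and I would handle it exactly as in the proof of Theorem \ref{th61}. Each $X_i$ is a continuous bijective image of the one-point Lindelöfication $L(\tau_i)$ of a discrete space of cardinality $\tau_i = |X_i|$. Putting $\tau = \sup_i \tau_i$, the product $\prod_i L(\tau_i)$ embeds as a closed subspace of $L(\tau)^\omega$, which is Lindelöf by Proposition IV.2.19 in \cite{arch}; hence $\prod_i L(\tau_i)$ is Lindelöf. Since $X$ is the continuous image of $\prod_i L(\tau_i)$ under the product of these bijections, $X$ is Lindelöf as well.

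For the countable tightness the key idea is to convert a countable product into a countable power so that Theorem \ref{Tod} becomes applicable (and this is the only place where PFA enters). I would form the amalgamated sum $Y = \bigsqcup_i X_i / {\sim}$, identifying all non-isolated points $x_i$ to a single point $y_0$, with the quotient topology. A routine check shows that $Y$ is an almost discrete Tychonoff space and each $X_i$ sits inside $Y$ as a topological subspace. A neighborhood of $y_0$ in $Y$ is determined by a choice of neighborhoods of $x_i$ in each $X_i$; since each such factor neighborhood has countable complement, the complement in $Y$ of a neighborhood of $y_0$ is a countable union of countable sets, hence countable. Therefore $Y$ is almost discrete and Lindelöf, so in particular $Y$ is condensed in the sense of the paper. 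Countable tightness $t(Y) = \omega$ follows from that of each factor: given $A \subseteq Y$ with $y_0 \in \overline{A}$, either $A$ is countable (take $B = A$), or by pigeonhole over the countably many factors some $A \cap X_i$ is uncountable, and then the condensed structure of $X_i$ together with $t(X_i) = \omega$ furnishes a countable $B \subseteq A \cap X_i$ with $x_i \in \overline{B}^{X_i}$, hence with $y_0 \in \overline{B}^Y$.

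Now invoking Theorem \ref{Tod} under PFA yields $t(Y^\omega) = \omega$. Since $X = \prod_i X_i$ embeds as a subspace of $Y^\omega$ via the inclusions $X_i \hookrightarrow Y$, and tightness is hereditary to subspaces, $t(X) = \omega$. Combined with the Lindelöf property already established, Theorem \ref{th61} then gives that $X$ is Grothendieck. The principal obstacle I expect is the wedge construction itself: it has to be verified that $Y$ is simultaneously Tychonoff, almost discrete, Lindelöf, condensed, countably tight, and contains each $X_i$ as a subspace. These conditions are individually routine but must all align, and it is precisely their conjunction that allows PFA's Theorem \ref{Tod} on a single space $Y$ to substitute for a hypothetical product version of that result.
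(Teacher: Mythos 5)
Your proposal is correct and follows essentially the same route as the paper: identify the non-isolated points of the factors to a single point to obtain one almost discrete Lindel\"{o}f countably tight (hence condensed) space, apply Todor\v{c}evi\'{c}'s $PFA$ theorem to its countable power, embed the product there to get $t(X)=\omega$, and conclude via Theorem \ref{th61}. The only difference is that you spell out the Lindel\"{o}fness of the product and the verification of the ``condensed'' hypothesis, which the paper leaves implicit.
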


\begin{proof} Let $Y=\prod Y_{i}$ be a countable product of  almost discrete Grothendieck spaces $Y_{i}$.
By Corollary \ref{cor1}, $Y_{i}$ is Lindel\"{o}f and $t(Y_{i})=\omega$ for every $i\in \mathbb{N}$.
By Theorem \ref{th61}, we just need to check that $t(Y)=\omega$.
Let $y_i$ be one non-isolated point of $Y_i$ for every $i\in \mathbb{N}$. Let $Z$ be a quotient space of $\bigcup \{Y_i: i\in \mathbb{N}\}$ with $\{y_i: i\in \mathbb{N}\}$ identified to a point.
Then $Z$ is an almost discrete Lindel\"{o}f space and $t(Z)=\omega$. By Theorem \ref{Tod}, $t(Z^{\omega})=\omega$. Since $Y$ is homeomorphic to a subset of $Z^{\omega}$, $t(Y)=\omega$.

\end{proof}

 A.V. Archangel'skii and V.V. Tkachuk posed the following question (Question 5.16 in \cite{Arh98} and  Problem 4.8.5 in \cite{Tk}):

{\it Let $X$ and $Y$ be Grothendieck spaces. Is then the free topological sum of $X$ and $Y$ a Grothendieck space?}

\medskip

 Note, that the free topological sum of any family of weakly Grothendieck spaces is a weakly Grothendieck space.

 \medskip

The following theorem answers the question in the class of almost discrete spaces.

 \begin{theorem} Let $X=\bigoplus \{X_i: i\in \mathbb{N}\}$ be a free topological sum of almost discrete Grothendieck spaces $X_i$. Then $X$ is a Grothendieck space.
  \end{theorem}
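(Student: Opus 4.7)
The plan is to reduce the problem to Theorem \ref{th33} by showing (i) that $X$ is weakly Grothendieck and (ii) that every compact subspace of $C_p(X)$ is Fr\'echet--Urysohn. Step (i) is essentially the remark preceding the theorem: $C_p(\bigoplus X_i)$ is naturally homeomorphic to the Tychonoff product $\prod_i C_p(X_i)$, and if $A\subset \prod_i C_p(X_i)$ is countably compact then each coordinate projection $\pi_i(A)$ is countably compact in $C_p(X_i)$, whose closure is compact because $X_i$ is Grothendieck; hence $\overline{A}$ is a closed subset of a compact product and is compact.

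For step (ii) I would first invoke Corollary \ref{cor1} to assert that each $X_i$ is Lindel\"of with $t(X_i)=\omega$. Then, following the argument of Theorem \ref{th61}, each almost discrete Lindel\"of space $X_i$ is a continuous bijective image of some one-point Lindel\"ofication $Y_i=L(\tau_i)$ of a discrete space, so $C_p(X_i)$ embeds (via the dual of this bijection) as a subset of $C_p(Y_i)$, where $Y_i$ is primary Lindel\"of. By Proposition IV.3.10 of \cite{arch}, $C_p(Y_i)$ carries a one-to-one continuous linear map into a $\Sigma$-product of real lines, whence every compact subset of $C_p(X_i)$ is a Corson compactum.

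Now let $K\subset C_p(X)\cong \prod_i C_p(X_i)$ be compact. Each projection $K_i:=\pi_i(K)$ is compact in $C_p(X_i)$, hence a Corson compactum by the previous paragraph. The class of Corson compacta is closed under countable products and under taking closed subspaces; therefore $\prod_i K_i$ is Corson, and $K$, being a closed subspace of $\prod_i K_i$, is itself a Corson compactum. In particular $K$ is Fr\'echet--Urysohn. Combining (i) and (ii), Theorem \ref{th33} yields that $X$ is Grothendieck.

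The main point requiring care is the identification $C_p(\bigoplus X_i)\cong\prod_i C_p(X_i)$ and the resulting reduction of compact subsets of $C_p(X)$ to products of compacta in the factors; everything else is a direct citation of results established earlier in the paper (Corollary \ref{cor1}, the proof of Theorem \ref{th61}, and Theorem \ref{th33}) together with the standard stability of Corson compacta under countable products and closed subspaces. No new set-theoretic hypothesis is needed, so the theorem holds in $ZFC$.
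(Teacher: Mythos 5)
Your proof is correct, and it is a close cousin of the paper's argument rather than an identical one. Both you and the paper reduce the statement to Theorem \ref{th33} and both pass through the one-point Lindel\"{o}fications $Y_i=L(\tau_i)$ of which the $X_i$ are continuous bijective images; the difference lies in how the Fr\'{e}chet--Urysohn property of compact subsets of $C_p(X)$ is extracted. The paper forms the free sum $Y=\bigoplus Y_i$, notes that it is a Lindel\"{o}f $P$-space, and concludes directly from the inclusion $C_p(X)\subset C_p(Y)$ that compact subsets of $C_p(X)$ are Fr\'{e}chet--Urysohn (leaning on the known fact about $C_p$ over Lindel\"{o}f $P$-spaces without spelling it out). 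You instead work coordinatewise: you use the canonical homeomorphism $C_p(\bigoplus_i X_i)\cong\prod_i C_p(X_i)$, obtain that compacta in each $C_p(X_i)$ are Corson via Proposition IV.3.10 of \cite{arch} applied to the single (trivially primary Lindel\"{o}f) space $Y_i$, and then close the argument with the stability of Corson compacta under countable products and closed subspaces. This buys two things: the proof reuses exactly the machinery already deployed in Theorem \ref{th61}, so no new external fact is needed, and it makes explicit the verification that $X$ is weakly Grothendieck (the standing hypothesis of Theorem \ref{th33}), which the paper delegates to the unproved remark preceding the statement. Both arguments are pure $ZFC$ arguments, as you note.
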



\begin{proof} Note that $X_i$ is a biijective continuous image of one-point
Lindel\"{o}fication $Y_i=L(\tau)$ of a discrete space of size $\tau_i=|X_i|$ for every $i$.
Then $Y=\bigoplus \{Y_i: i\in \mathbb{N}\}$ is a Lindel\"{o}f $P$-space. Since $C_p(X)\subset C_p(Y)$, every compact subset of $C_p(X)$ is Fr\'{e}chet-Urysohn.
By Theorem \ref{th33}, $X$ is Grothendieck.
\end{proof}






\medskip

{\bf Acknowledgements.} I would like to thank Evgenii Reznichenko for several valuable comments.


\bibliographystyle{model1a-num-names}
\bibliography{<your-bib-database>}

\end{document}